\theoremstyle{plain}
\newtheorem{theorem}{Theorem}
\numberwithin{equation}{section}
\newcommand{\R}{\mathbb{R}}
\newcommand{\CC}{\mathbb{C}}
\newcommand{\ssl}{{\rm sl}}
\newcommand{\ccl}{{\rm cl}}
\newcommand{\ii}{{\rm i}}
\begin{document}

\title {The Lemniscatic Functions}

\date{}

\author[P.L. Robinson]{P.L. Robinson}

\address{Department of Mathematics \\ University of Florida \\ Gainesville FL 32611  USA }

\email[]{paulr@ufl.edu}

\subjclass{} \keywords{}

\begin{abstract}

We develop the theory of the lemniscatic functions $\ssl$ and $\ccl$ from their definition as solutions to an initial value problem. 

\end{abstract}

\maketitle

\medbreak

\section{Introduction} 

\medbreak 

The lemniscatic functions $\ssl$ (or {\it sinus lemniscaticus}) and $\ccl$ (or {\it cosinus lemniscaticus}) are frequently introduced as special cases of the Jacobian elliptic functions: thus, 
$$\sqrt{2} \: \ssl (z) = {\rm sd} (\sqrt{2} \: z) \; \; {\rm and} \; \; \ccl (z) = {\rm cn} (\sqrt{2} \: z)$$
where the indicated functions ${\rm sd} = {\rm sn}/{\rm dn}$ and ${\rm cn}$ have self-complementary modulus $1/ \sqrt{2}$; they are also frequently introduced by inversion of the corresponding Abelian integrals. 

\medbreak 

Our purpose here is to develop these functions {\it ab initio}, not by the inversion of integrals but rather directly, as the solutions to initial value problems. In addition to providing an independent construction of these functions, our account serves to illustrate the effectiveness for these initial value problems of the Picard existence-uniqueness theorem and the principle of analytic continuation, in conjunction with Weierstrassian duplication and the Schwarz Reflexion Principle. 

\medbreak 

\section{The Lemniscatic Functions}

\medbreak 

The trigonometric functions $\sin$ and $\cos$ may be defined as solutions to the initial value problem 
$$f\,' = g, \; g\,' = - f; \; \; f(0) = 0, \; g(0) = 1.$$
When we view this as a complex system, its solutions extend throughout $\CC$ as singly-periodic entire functions. 

\medbreak 

Here, we introduce the lemniscatic functions $\ssl$ and $\ccl$ as solutions to the related  initial value problem (signified by {\bf IVP} in what follows) 
$$s\,' = c \, (1 + s^2), \; c\,' = - s \, (1 + c^2); \; \; s(0) = 0, \; c(0) = 1.$$
As we shall see, the solutions to this complex system extend throughout $\CC$ as doubly-periodic meromorphic functions. 

\medbreak 

To start the construction of this pair of functions, it is convenient to begin with the function $s$ alone. In order to motivate the appropriate initial value problem for $s$ we establish at once the following counterpart to the `Pythagorean' identity. 

\medbreak 

\begin{theorem} \label{Py}
Solutions $s$ and $c$ to {\bf IVP} on a connected open set containing $0$ satisfy 
$$s^2 + s^2 c^2 + c^2 = 1.$$
\end{theorem}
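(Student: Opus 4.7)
The plan is to treat the claimed identity as a conserved quantity for the IVP system and show it is constant on the connected open set where the solutions are defined, then evaluate at $z=0$.

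Concretely, set $F := s^2 + s^2 c^2 + c^2$ and differentiate. Grouping terms, one obtains
$$F\,' = 2 s s\,' (1 + c^2) + 2 c c\,' (1 + s^2).$$
Now substitute the IVP relations $s\,' = c(1+s^2)$ and $c\,' = -s(1+c^2)$; the two resulting summands are
$$2 s \cdot c(1+s^2)(1+c^2) \; \; {\rm and} \; \; -2 c \cdot s(1+c^2)(1+s^2),$$
which are negatives of each other. Hence $F\,' \equiv 0$ on the given connected open set.

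Because the underlying set is connected, $F$ is therefore constant there, and the constant value is determined by evaluation at the initial point: $F(0) = 0^2 + 0^2 \cdot 1^2 + 1^2 = 1$. This yields the asserted identity.

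There is essentially no obstacle here; the proof is a one-line differentiation once the right expression is formed. The only point worth noting is the algebraic observation that the factor $(1+s^2)(1+c^2)$ appears symmetrically in both terms of $F\,'$ after substitution, which is exactly what makes the cross-cancellation possible and which foreshadows why the quadratic form $s^2 + s^2 c^2 + c^2$ (rather than the trigonometric $s^2 + c^2$) is the natural Pythagorean substitute for this system.
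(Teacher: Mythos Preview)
Your proof is correct and follows essentially the same approach as the paper: differentiate $s^2 + s^2 c^2 + c^2$, group the terms as $2 s s'(1+c^2) + 2 c c'(1+s^2)$, substitute the {\bf IVP} equations to see the sum vanishes, and conclude by connectedness and evaluation at $0$. The paper presents this in a single displayed line, but the argument is identical.
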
 

\begin{proof} 
This fundamental identity is an immediate consequence of {\bf IVP}: by straightforward differentiation, 
$$(s^2 + s^2 c^2 + c^2)\,' = 2 s s\,' (1 + c^2) + (1 + s^2) 2 c c\,' = 0$$
so that $s^2 + s^2 c^2 + c^2$ is constant, its value at $0$ holding throughout its domain. 
\end{proof} 

\medbreak 

We shall see many consequences to this formula, one of whose manifestations is the equivalent form 
$$(1 + s^2)(1 + c^2) = 2.$$

\medbreak 

When Theorem \ref{Py} is taken into account, the differential equation for $s$ squares to yield 
$$(s\,')^2 = c^2 (1 + s^2)(1 + s^2) = (1 - s^2)(1 + s^2) = 1 - s^4.$$
For this differential equation, the initial condition $s(0) = 0$ alone entails $s\,' (0) = \pm 1$; its companion $c(0) = 1$ singles out $s\,'(0) = 1$. Thus, $s$ satisfies the first-order initial value problem 
$$s\,' = (1 - s^4)^{1/2}; \; s(0) = 0$$ 
wherein the square-root is initially determined by its principal value. 

\medbreak 

\begin{theorem} \label{s}
The initial value problem 
$$s\,' = (1 - s^4)^{1/2}; \; s(0) = 0$$
has a unique solution in the open disc $B_r(0)$ of radius $r = 2^{-1/2}$ about $0$. 
\end{theorem}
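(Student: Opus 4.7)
The plan is to apply the Picard existence-uniqueness theorem to the first-order holomorphic ODE $s\,' = F(s)$ with $F(s) = (1 - s^4)^{1/2}$ and $s(0) = 0$. The whole argument reduces to (i) fixing a domain of holomorphy for $F$, (ii) bounding $|F|$ there, and (iii) reading off the promised radius $r = 2^{-1/2}$ from the standard estimate.

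\medbreak

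First I would verify that the principal branch of $(1 - s^4)^{1/2}$ is holomorphic on the open unit disc $B_1(0)$. Indeed, if $|s| < 1$ then $1 - s^4$ lies in the open disc of radius $1$ centred at $1$, which is contained in the open right half-plane; the principal square root is holomorphic there, so $F$ is holomorphic on $B_1(0)$, and $F(0) = 1$ is consistent with the sign already fixed by $c(0) = 1$. The trivial estimate $|F(s)|^2 = |1 - s^4| \leq 1 + |s|^4$ then gives $|F(s)| \leq \sqrt{1 + b^4}$ on each closed subdisc $\overline{B_b(0)}$ with $b \in (0, 1)$.

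\medbreak

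Now I would invoke the standard form of Picard's theorem for autonomous holomorphic ODEs: if $F$ is holomorphic on $\overline{B_b(0)}$ and bounded by $M$ there, then the IVP $s\,' = F(s)$, $s(0) = 0$ possesses a unique holomorphic solution on the open disc of radius $b/M$. With $M = \sqrt{1 + b^4}$ this yields a solution on $|t| < b/\sqrt{1 + b^4}$; as $b \uparrow 1$ the radius increases up to $2^{-1/2}$. The uniqueness clause forces the solutions on the nested discs to coincide where they overlap, so they patch together to furnish the required unique holomorphic solution on the full disc $B_r(0)$. The only mildly subtle step is this final patching; the holomorphy of $F$ on $B_1(0)$ and the computation of the bound are both elementary, and no obstacle is expected beyond correctly calibrating $b$ and $M$ to extract the exact radius $r = 2^{-1/2}$.
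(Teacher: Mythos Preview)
Your proposal is correct and follows essentially the same route as the paper: bound $|(1-w^4)^{1/2}|$ by $M=(1+R^4)^{1/2}$ on the disc $|w|<R<1$, apply Picard to get a unique solution on $B_{R/M}(0)$, and let $R\uparrow 1$ so that $R/M\uparrow 2^{-1/2}$. You supply a little more detail than the paper does---the explicit justification that the principal branch of $(1-s^4)^{1/2}$ is holomorphic on $B_1(0)$ and the remark that the nested solutions patch---but the argument is the same.
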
 

\begin{proof} 
An application of the Picard existence-uniqueness theorem, of which Theorem 2.3.1 in [1] is a suitable version. Fix $0 < R < 1$: if $|w| < R$ then $|(1 - w^4)^{1/2}| < (1 + R^4)^{1/2} = : M$; as the necessary Lipschitz condition is satisfied, the stated initial value problem for $s$ has a unique solution in the open disc $B_{R/M} (0)$. Here, we let $R \uparrow 1$ so that $R/M \uparrow r : =  2^{-1/2}$ to complete the proof. 
\end{proof} 

\medbreak 

Alternatively, this initial value problem may be presented in the essentially equivalent form
$$(s\,')^2 = 1 - s^4; \; \; s\,'(0) = 1.$$ 

\medbreak 

Note that here, the solution $s$ maps the open disc $B_r (0)$ to the open unit disc $B_1(0)$ by its very construction; in particular, $s$ as defined takes neither $\ii$ nor $-\ii$ as a value, so $1 + s^2$ is nowhere zero. Further, note from $(s\,')^2 = 1 - s^4$ by differentiation that $2 s\,' s\,'' = - 4 s^3 s\,'$ whence the holomorphic function $s$ solves the second-order initial value problem 
$$s\,'' = - 2 s^3; \; \; s(0) = 0, \; s\,'(0) = 1.$$ 

\medbreak 

Incidentally, although we have here chosen to present $s$ as the solution to a first-order initial value problem, there will be points at which this approach is inadequate, because the Lipschitz condition for the Picard theorem is not met. Second-order initial value problems for the differential equation `$s\,'' = - 2 s^3$' are always amenable to the Picard existence-uniqueness theorem, as $- 2 s^3$ is polynomial in $s$. 

\medbreak 

We may now confirm the unique existence of a solution to {\bf IVP} in the open disc $B_r (0)$.  

\medbreak 

\begin{theorem} \label{IVP}
The initial value problem 
$$s\,' = c \, (1 + s^2), \; c\,' = - s \, (1 + c^2); \; \; s(0) = 0, \; c(0) = 1$$ 
has a unique solution in the disc $B_r (0)$. 
\end{theorem}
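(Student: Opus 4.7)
The plan is to leverage Theorem \ref{s} to get $s$, then \emph{define} $c$ in the only way consistent with the first equation of {\bf IVP} and check the rest.

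For uniqueness, I would argue as in the motivating discussion preceding Theorem \ref{s}: if $(s,c)$ solves {\bf IVP} on $B_r(0)$, then Theorem \ref{Py} combined with the equation $s' = c(1+s^2)$ forces $(s')^2 = 1 - s^4$, and the initial conditions $s(0)=0$, $c(0)=1$ force $s'(0) = 1$; by Theorem \ref{s} such an $s$ is unique, and then $c = s'/(1+s^2)$ is determined (the denominator never vanishes on $B_r(0)$, by the remark following Theorem \ref{s}).

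For existence, I would take the $s$ supplied by Theorem \ref{s} and define
$$c := \frac{s\,'}{1 + s^2}$$
on $B_r(0)$; this is holomorphic because $1 + s^2$ is nowhere zero there. The first differential equation of {\bf IVP} holds by construction, and $c(0) = s\,'(0)/(1+s(0)^2) = 1$. It remains to verify the second equation $c\,' = -s(1+c^2)$. A short computation using $(s\,')^2 = 1 - s^4$ gives
$$1 + c^2 \;=\; \frac{(1+s^2)^2 + (s\,')^2}{(1+s^2)^2} \;=\; \frac{2(1+s^2)}{(1+s^2)^2} \;=\; \frac{2}{1+s^2},$$
which incidentally recovers the Pythagorean identity of Theorem \ref{Py}. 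Differentiating the definition of $c$ and using $s\,'' = -2 s^3$ (as noted after Theorem \ref{s}) together with $(s\,')^2 = (1-s^2)(1+s^2)$, I would reduce $c\,'$ to $-2s/(1+s^2)$, which by the displayed formula equals $-s(1+c^2)$, as required.

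The main obstacle is the verification of the second equation for $c$: it is not automatic from the definition and depends on combining the two alternative forms of the differential equation for $s$ (the first-order form $(s\,')^2 = 1 - s^4$ and the second-order form $s\,'' = -2 s^3$). Everything else is bookkeeping: holomorphy of $c$ from non-vanishing of $1+s^2$, the initial conditions, and the reduction of uniqueness to Theorem \ref{s}.
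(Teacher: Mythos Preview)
Your proposal is correct and follows essentially the same route as the paper: define $c := s'/(1+s^2)$ from the $s$ of Theorem \ref{s}, verify the second equation of {\bf IVP} by combining $s'' = -2s^3$ with $(s')^2 = 1 - s^4$ to obtain $(1+s^2)\,c' = -2s$, and then use the Pythagorean identity in the form $(1+s^2)(1+c^2)=2$ to finish; uniqueness is likewise reduced to Theorem \ref{s}. The only cosmetic difference is that you compute $1+c^2 = 2/(1+s^2)$ up front, whereas the paper invokes it at the end.
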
 

\begin{proof} 
We take the function $s$ guaranteed by Theorem \ref{s} and define the holomorphic function $c$ in $B_r (0)$ by 
$$c = s\,' / (1 + s^2).$$
The initial conditions $s(0) = 0$ and $c(0) = 1$ are evident from the definitions, as is the differential equation $s\,' = c \, (1 + s^2)$; for the companion differential equation $c\,' = - s \, (1 + c^2)$, we calculate 
$$(1 + s^2)^2 \, c\,' = (1 + s^2) s\,'' - s\,' 2 s s\,' = (1 + s^2) (- 2 s^3) - 2 s (1 - s^4)$$
whereupon cancellation results in  
$$(1 + s^2) \, c\,' = - 2s(s^2 + 1 - s^2) = - 2 s$$
and it remains only to invoke the remark following Theorem \ref{Py}. This proves existence; uniqueness is clear in view of Theorem \ref{s}. 
\end{proof} 

\medbreak 

It is readily verified that $c$ solves the companion second-order initial value problem 
$$c\,'' = - 2 c^3; \; \; c(0) = 1, \; c\,'(0) = 0.$$ 

\medbreak 

The lemniscatic functions admit a number of symmetries that are conveniently handled for $s$ and $c$ in tandem. We shall establish these symmetries in the open disc $B_r (0)$; once the functions are extended to larger appropriately symmetric connected domains, the symmetries will also extend (and the extensions will continue to satisfy {\bf IVP}) on account of the Identity Theorem. 

\medbreak 

\begin{theorem} \label{real} 
If $z \in B_r (0)$ then $s (\overline{z}) = \overline{s(z)}$ and $c (\overline{z}) = \overline{c(z)}$. 
\end{theorem}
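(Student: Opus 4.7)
The plan is to apply the uniqueness clause of Theorem \ref{IVP} to a reflected pair of functions, which is essentially an incarnation of the Schwarz Reflexion Principle adapted to the system.

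First I would define, for $z \in B_r(0)$, the functions
$$\tilde{s}(z) := \overline{s(\overline{z})}, \qquad \tilde{c}(z) := \overline{c(\overline{z})}.$$
The disc $B_r(0)$ is symmetric under complex conjugation, so these are well defined on all of $B_r(0)$. A standard observation (the same one that powers the Schwarz reflexion argument) shows that if $f$ is holomorphic on a domain invariant under conjugation, then $z \mapsto \overline{f(\overline{z})}$ is holomorphic there, with derivative $z \mapsto \overline{f\,'(\overline{z})}$. Thus $\tilde{s}$ and $\tilde{c}$ are holomorphic on $B_r(0)$.

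Next I would verify that $(\tilde{s}, \tilde{c})$ solves the same IVP as $(s, c)$. The initial conditions are immediate: $\tilde{s}(0) = \overline{s(0)} = 0$ and $\tilde{c}(0) = \overline{c(0)} = 1$. For the differential equations, using $\tilde{s}\,'(z) = \overline{s\,'(\overline{z})}$ and the fact that $s\,' = c(1+s^2)$ at the point $\overline{z}$, I would compute
$$\tilde{s}\,'(z) = \overline{c(\overline{z})\bigl(1 + s(\overline{z})^2\bigr)} = \overline{c(\overline{z})}\,\bigl(1 + \overline{s(\overline{z})}^{\,2}\bigr) = \tilde{c}(z)\bigl(1 + \tilde{s}(z)^2\bigr),$$
and similarly $\tilde{c}\,'(z) = -\tilde{s}(z)\bigl(1 + \tilde{c}(z)^2\bigr)$.

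By the uniqueness clause of Theorem \ref{IVP}, $\tilde{s} = s$ and $\tilde{c} = c$ on $B_r(0)$, which unpacks to the two claimed identities. There is no real obstacle here; the only point requiring a moment's care is the holomorphy of $z \mapsto \overline{f(\overline{z})}$, which follows either by direct verification of the Cauchy--Riemann equations or by inspection of the power series of $f$ with conjugated coefficients.
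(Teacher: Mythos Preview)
Your proof is correct and is essentially identical to the paper's own argument: the paper also defines $S(z)=\overline{s(\overline{z})}$ and $C(z)=\overline{c(\overline{z})}$, notes that $(S,C)$ satisfies {\bf IVP}, and invokes the uniqueness clause of Theorem~\ref{IVP}. You have merely filled in a little more detail about the holomorphy of the reflected functions and the derivative computation.
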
 

\begin{proof} 
Define functions $S$ and $C$ in $B_r (0)$ by the rules $S(z) = \overline{s(\overline{z})}$ and $C(z) = \overline{c(\overline{z})}$. It is readily verified that both 
$$S\,' = C (1 + S^2), \; \; C\,' = - S (1 + C^2)$$
and 
$$S(0) = 0, \; \; C(0) = 1.$$ 
By the uniqueness clause in Theorem \ref{IVP} we conclude that $(S, C) = (s, c)$. 
\end{proof} 

\medbreak

In other words, we may speak of the functions $s$ and $c$ as being `real': in particular, they are real-valued on the interval $( - r, r) = \R \cap B_r (0)$ and the coefficients in their Taylor expansions about $0$ are real. 

\medbreak 

As regards parity, $s$ is odd and $c$ is even. 

\medbreak 

\begin{theorem} \label{parity} 
If $z \in B_r (0)$ then $s(- z) = - s(z)$ and $c(- z) = c(z)$. 
\end{theorem}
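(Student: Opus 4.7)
The plan is to follow the exact same template as the proof of Theorem \ref{real}, replacing conjugation by negation of the argument. Specifically, I would define auxiliary holomorphic functions on $B_r(0)$ by
$$S(z) = - s(-z), \qquad C(z) = c(-z),$$
which are well defined because $B_r(0)$ is symmetric about $0$. The goal is to show that the pair $(S, C)$ solves \textbf{IVP}, and then to invoke the uniqueness clause of Theorem \ref{IVP} to conclude $(S, C) = (s, c)$; this is precisely the desired parity statement.

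First I would check the initial conditions: $S(0) = - s(0) = 0$ and $C(0) = c(0) = 1$. Next I would differentiate, applying the chain rule and the fact that $s$ and $c$ satisfy \textbf{IVP} at the point $-z$. A short computation gives
$$S\,'(z) = s\,'(-z) = c(-z)\bigl(1 + s(-z)^2\bigr) = C(z)\bigl(1 + S(z)^2\bigr),$$
and likewise
$$C\,'(z) = - c\,'(-z) = s(-z)\bigl(1 + c(-z)^2\bigr) = - S(z)\bigl(1 + C(z)^2\bigr),$$
so $(S, C)$ indeed satisfies the system and the initial conditions on $B_r(0)$. Uniqueness from Theorem \ref{IVP} then forces $S = s$ and $C = c$, which is exactly the statement that $s$ is odd and $c$ is even.

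There is no real obstacle here; the only point requiring care is the double sign flip in the derivative of $S$, where the chain rule contributes a factor of $-1$ that cancels the external minus sign in the definition of $S$, and similarly in the derivative of $C$, where the chain rule supplies the minus sign that turns $c\,'$ into the correct expression in $S$ and $C$. Once the signs are tracked, the verification is entirely mechanical and the uniqueness theorem does the rest.
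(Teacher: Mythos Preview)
Your proof is correct and follows exactly the approach taken in the paper: define $S(z)=-s(-z)$ and $C(z)=c(-z)$, verify that $(S,C)$ satisfies \textbf{IVP}, and invoke the uniqueness clause of Theorem~\ref{IVP}. The paper leaves the differentiation and evaluation as routine, whereas you have spelled out the sign-tracking explicitly; the arguments are otherwise identical.
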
 

\begin{proof} 
Define functions $S$ and $C$ in $B_r (0)$ by the rules $S(z) = - s(-z)$ and $C(z) = c(-z)$. By differentiation and evaluation, the pair $(S, C)$ satisfies {\bf IVP}; so Theorem \ref{IVP} justifies the identifications $S = s$ and $C = c$. 
\end{proof} 

\medbreak 

The following symmetry essentially captures the `imaginary transformation' of Jacobi in the present context. 

\medbreak 

\begin{theorem} \label{i}
If $z \in B_r(0)$ then $s(\ii z) = \ii s(z)$ and $c(\ii z) = 1/c(z)$. 
\end{theorem}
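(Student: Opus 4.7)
The plan is to follow the template of the two preceding symmetry theorems: exhibit functions $S$ and $C$ on $B_r(0)$ built from the proposed identities, check that they solve \textbf{IVP}, and invoke the uniqueness clause of Theorem \ref{IVP}.

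First I would set $S(z) := -\ii\, s(\ii z)$ and $C(z) := 1/c(\ii z)$. Since $B_r(0)$ is invariant under multiplication by $\ii$, the compositions make sense; the definition of $C$ requires the additional fact that $c$ does not vanish on $B_r(0)$. This nonvanishing follows from Theorem \ref{Py}: the Pythagorean identity rearranges to $c^2 (1 + s^2) = 1 - s^2$, and since $s$ maps $B_r(0)$ into $B_1(0)$ by its very construction, $s^2 \neq 1$ throughout $B_r(0)$, forcing $c \neq 0$ there. (Likewise $1 + s^2$ is already known to be nowhere zero.)

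Next I would verify the initial conditions $S(0) = 0$ and $C(0) = 1$, which are immediate, and then the two differential equations. Direct differentiation yields $S\,'(z) = s\,'(\ii z)$; substituting the equation $s\,' = c(1 + s^2)$ from \textbf{IVP} and invoking the rearranged Pythagorean identity $c^2(1+s^2) = 1 - s^2$ (evaluated at $\ii z$) converts the right-hand side into $(1 - s(\ii z)^2)/c(\ii z) = C(z)(1 + S(z)^2)$, as required. The companion identity $C\,' = -S(1 + C^2)$ is more direct: differentiating $1/c(\ii z)$ and substituting the $c\,'$ equation from \textbf{IVP} gives the result after routine simplification. With $(S,C)$ now a solution of \textbf{IVP} on $B_r(0)$, the uniqueness clause of Theorem \ref{IVP} forces $(S,C) = (s,c)$, which is precisely the pair of identities claimed.

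The main obstacle is the opening step: ensuring that $C$ is well-defined on all of $B_r(0)$, and recognizing that Theorem \ref{Py} (in the form $c^2(1+s^2) = 1 - s^2$) is exactly the algebraic bridge required to match the $S\,'$ equation with the $s\,'$ equation of \textbf{IVP}. Once those two points are in place, the verification settles into the same uniqueness template already used in Theorems \ref{real} and \ref{parity}.
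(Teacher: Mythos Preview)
Your proof is correct and follows exactly the route the paper takes: define $S(z)=-\ii\,s(\ii z)$ and $C(z)=1/c(\ii z)$, verify \textbf{IVP}, and invoke the uniqueness in Theorem~\ref{IVP}. The paper leaves the verification as ``routine'' and relegates the nonvanishing of $c$ to a remark after the proof, whereas you have (correctly) supplied those details, including the use of Theorem~\ref{Py} in the form $c^2(1+s^2)=1-s^2$.
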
 

\begin{proof} 
Now routine: define $S$ and $C$ in $B_r (0)$ by the rules $S(z) = - \ii s(\ii z)$ and $C(z) = 1/c(\ii z)$; then verify that the pair $(S, C)$ satisfies {\bf IVP} and invoke Theorem \ref{IVP}. 
\end{proof} 

\medbreak 

Perhaps we ought to remark here that the identity in Theorem \ref{Py} and the fact that $|s| < 1$ throughout $B_r (0)$ together ensure that $c$ vanishes nowhere in $B_r (0)$. Of course, Theorem \ref{i} has Theorem \ref{parity} as an immediate corollary. 

\medbreak 

From this point on, the construction of the lemniscatic functions proceeds in two stages. The first stage is to extend the functions from $B_r (0)$ to the disc $B_{2 r} (0)$ of radius $2 r = \sqrt2$ by duplication formulae, and to reduplicate if need be: appropriate duplication formulae for $s$ and $c$ jointly are 
$$s(2 z) = \frac{2 s(z) c(z)}{1 - s(z)^2 c(z)^2}\: ,$$
$$c(2 z) = \frac{c(z)^2 - s(z)^2}{1 + s(z)^2 c(z)^2}\: ;$$
naturally, the justification of these formulae is part of the process. The second stage is to complete the extension of $s$ and $c$ as meromorphic functions in the plane by handing the matter over to the Schwarz Reflexion Principle, once duplication arrives at configurations to which reflexion may be conveniently applied. 

\medbreak 

It is certainly possible to extend $s$ and $c$ beyond the disc $B_r (0)$ jointly, as solutions of {\bf IVP}. However, at this point we prefer to proceed further with the two functions separately: first extending the lemniscatic sine; then extending the lemniscatic cosine. 

\medbreak 

\section{The Lemniscatic Sine} 

\medbreak 

In this Section, we shall extend the lemniscatic sine function $s$ beyond its current domain; in fact, we shall extend the holomorphic function $s$ from $B_r(0)$ to the entire plane as an elliptic function, whose characteristic features we shall identify. 

\medbreak 

As mentioned at the close of the preceding Section, the first phase of the extension process is effected by duplication. This requires a formula that expresses the value of $s$ at $2 z$ in terms of the values of $s$ and its derivative at $z$. We shall make no attempt at discovering such a formula; we shall merely accept it for use. Needless to say, the formula that we use will be rigorously justified as we go. 

\medbreak 

Explicitly, we define the function $S$ in $B_{2 r} (0)$ by the rule that if $z \in B_r (0)$ then 
$$S(2 z) = \frac{2 s(z) s\,'(z)}{1 + s(z)^4}\, .$$
Observe that $S$ is holomorphic, because $s$ is holomorphic and $|s| < 1$ throughout $B_r (0)$. It is a straightforward (though moderately tedious) exercise to verify that 
$$(S\,')^2 = 1 - S^4$$
on account of the identities $(s\,')^2 = 1 - s^4$ and $s\,'' = - 2 s^3$; also, $S(0) = 0$ and $S\,'(0) = 1$. Theorem \ref{s} now assures us that the restriction of $S$ to $B_r(0)$ is $s$ precisely; otherwise said, $S$ is a holomorphic extension of $s$ to the open disc $B_{2 r} (0)$. This being the case, the capitalization has served its clarifying purpose and will be dropped: from now on, we shall write simply $s : B_{2 r} (0) \to \CC$ for the extended function. The formula by which the extension was defined then becomes a duplication formula: if $z \in B_r(0)$ then 
$$s(2 z) = \frac{2 s(z) s\,'(z)}{1 + s(z)^4}\, .$$ 
Moreover, the symmetries established for $s$ on $B_r(0)$ in Theorem \ref{real}, Theorem \ref{parity} and Theorem \ref{i} continue to hold for $s$ on $B_{2 r}(0)$. We may take the liberty of using the same names for these theorems in this extended context.

\medbreak 

We now entertain the possibility of further extension to $B_{4 r} (0)$ by reduplication. Here we encounter a potential difficulty: whereas $1 + s^4$ is nowhere zero in the disc $B_r (0)$, it may have zeros in the disc $B_{2 r} (0)$; indeed there it does have zeros, which double up to poles in $B_{4 r}(0)$. Our next task is to locate these problematic points and be sure to account for all of them. 

\medbreak 

The following results, regarding the behaviour of $s$ on the real and imaginary diameters of discs and their angle bisectors, will be useful. 

\medbreak 

\begin{theorem} \label{X}
Let $z \in B_{2 r} (0)$: if $z^4 \in \R$ then $s(z)$ is a real multiple of $z$.  
\end{theorem}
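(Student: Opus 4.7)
The plan is to note that $z^4 \in \R$ forces $z$ to lie on one of four lines through the origin, namely $\R$, $\ii\R$, $e^{\ii\pi/4}\R$, and $e^{3\ii\pi/4}\R$ (together with the trivial point $z = 0$), and to treat each of these four rays in turn using the symmetries already established. Because $B_{2r}(0)$ is stable under complex conjugation and under multiplication by $\ii$, every auxiliary point that intervenes will remain in the domain of $s$, so Theorems \ref{real}, \ref{parity}, and \ref{i} can all be applied freely.

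The two ``axial'' cases are immediate. If $z \in \R$, Theorem \ref{real} gives $s(z) \in \R$, hence $s(z)$ is a real multiple of $z$. If $z = \ii y$ with $y \in \R$, Theorem \ref{i} and Theorem \ref{real} yield $s(z) = \ii s(y) \in \ii \R$, again a real multiple of $z = \ii y$.

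The ``diagonal'' cases are the substantive ones, and the trick is to combine Theorem \ref{real} with Theorem \ref{i}. If $z = e^{\ii\pi/4} t$ with $t \in \R$, then $\overline{z} = - \ii z$, so
\[
\overline{s(z)} = s(\overline{z}) = s(- \ii z) = - s(\ii z) = - \ii \, s(z)
\]
by Theorems \ref{real}, \ref{parity}, and \ref{i} in succession. Writing $s(z) = u + \ii v$ with $u, v \in \R$ and comparing real and imaginary parts forces $u = v$, so $s(z) = u(1 + \ii) = u \sqrt 2 \, e^{\ii \pi/4}$, a real multiple of $z$. The case $z = e^{3\ii \pi/4} t$ is entirely analogous: now $\overline{z} = \ii z$, which gives $\overline{s(z)} = \ii\, s(z)$, hence $s(z) \in e^{-\ii\pi/4} \R = e^{3\ii\pi/4} \R$, once again a real multiple of $z$.

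The only real obstacle is spotting that the diagonal rays $z = e^{\pm \ii \pi/4} t$ are characterized by $\overline{z} = \mp \ii z$, which is what allows the reflexion symmetry and the imaginary-transformation symmetry to be chained together into a single scalar equation on $s(z)$; once that observation is made, the rest of the argument is routine bookkeeping.
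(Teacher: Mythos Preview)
Your proof is correct and follows essentially the same route as the paper's: split into the four lines determined by $z^4 \in \R$, dispose of the axial cases directly via Theorems \ref{real} and \ref{i}, and handle the diagonals by observing that $\overline{z} = \mp \ii z$ there and chaining the reflexion and $\ii$-equivariance symmetries to get $\overline{s(z)} = \mp \ii\, s(z)$. The only cosmetic difference is that you insert Theorem \ref{parity} explicitly in the step $s(-\ii z) = -\ii\, s(z)$, whereas the paper absorbs it into the invocation of Theorem \ref{i}.
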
 

\begin{proof} 
Let $z \in B_{2 r} (0)$. From Theorem \ref{real} (extended) we deduce that if $z$ is real then so is $s(z)$; from Theorem \ref{i} (extended) we further deduce that if $z$ is purely imaginary then so is $s(z)$. Note that 
$$\{t \pm \ii t : t \in \R \} = \{ w \in \CC : \overline{w} = \mp \ii w\}.$$
It follows that if $z \in \{t + \ii t : t \in \R \}$ then $\overline{z} = - \ii z$ whence Theorem \ref{real} and Theorem \ref{i} yield  
$$\overline{s(z)} = s(\overline{z}) = s(- \ii z) = - \ii s(z)$$
and therefore $s(z) \in \{t + \ii t : t \in \R \}$. Similarly for $\{t - \ii t : t \in \R \}$.
\end{proof} 

\medbreak 

Consider further the behaviour of $s$ on the interval $(- r, r) = \R \cap B_r (0)$. From the fact that $|s| < 1$ on the disc $B_r(0)$ we deduce that $s\,' = (1 - s^4)^{1/2}$ where the positive square-root is taken throughout the interval $(-r, r)$; in particular, $s$ strictly increases there. The behaviour of $s$ on $\ii \R \cap B_r (0)$ matches this, by Theorem \ref{i}.  

\medbreak 

For convenience, let us agree to write 
$$\gamma = \frac{1 + \ii}{\sqrt{2}} = e^{\pi \ii/4}$$
for the square-root of $\ii$ in the positive quadrant. According to Theorem \ref{X}, $s$ maps the portion of $B_{2 r} (0)$ that lies in the line $\{t + \ii t : t \in \R \} = \gamma \R$ to the same line. The function $f : B_{2 r} (0) \to \CC$ defined by the rule that if $z \in B_{2 r} (0)$ then 
$$s(\gamma z) = \gamma f(z)$$
is thus real-valued on the interval $(- 2 r, 2 r)$ and there satisfies the differential equation 
$$f\,' = (1 + f^4)^{1/2}$$
wherein the square-root is positive (indeed, at least unity) throughout the interval. 

\medbreak 

We may now improve upon Theorem \ref{X} on $B_r(0)$ as follows. 

\medbreak 

\begin{theorem} \label{X+}
Let $z \in B_r (0)$: if $z^4 \in \R$ then $s(z)$ is a positive multiple of $z$. 
\end{theorem}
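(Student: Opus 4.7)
The plan is to use Theorem \ref{X} to reduce the claim to a sign determination, and then to split by cases according to which of the four rays $\R$, $\ii \R$, $\gamma \R$, $\bar\gamma \R$ the point $z$ lies on (the condition $z^4 \in \R$ forcing exactly this). For each ray it remains only to show that the real multiplier $s(z)/z$ supplied by Theorem \ref{X} is strictly positive.

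For the coordinate axes I would exploit $(s\,')^2 = 1 - s^4$. Since $|s| < 1$ throughout $B_r(0)$, the derivative $s\,'$ has no zero in the whole disc. Restricted to the real diameter $(-r, r)$ it is continuous, real, and equals $1$ at the origin, so stays strictly positive; hence $s$ is strictly increasing on $(-r, r)$ with $s(0) = 0$, which gives $s(t)/t > 0$ for every nonzero $t \in (-r, r)$. The imaginary axis then follows from Theorem \ref{i}, via $s(\ii t)/(\ii t) = s(t)/t > 0$.

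For the diagonals I would call on the auxiliary function $f$ introduced in the paragraph preceding the theorem, characterised by $s(\gamma w) = \gamma f(w)$ and obeying $f\,' = (1 + f^4)^{1/2} \geq 1$ on $(-2r, 2r)$. Strict monotonicity of $f$ together with $f(0) = 0$ yields $f(t)/t > 0$ for $t \in (-r, r)\setminus\{0\}$, hence $s(\gamma t)/(\gamma t) = f(t)/t > 0$. The remaining ray $\bar\gamma \R$ drops out by conjugation: Theorem \ref{real} gives $s(\bar\gamma t) = \overline{s(\gamma t)} = \overline{\gamma f(t)} = \bar\gamma f(t)$, whence $s(\bar\gamma t)/(\bar\gamma t) = f(t)/t > 0$ again.

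The only delicate point, to the extent there is one, is not computational but the consistent choice of branch for the various square-roots. That choice has already been settled by the earlier discussion (the sign of $s\,'$ on $(-r, r)$, and the clause that $f\,'$ is at least unity on the real diameter), so the four cases reduce to clean sign checks; I expect no genuine obstacle beyond the mild bookkeeping of keeping those branches straight.
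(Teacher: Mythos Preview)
Your proof is correct and follows essentially the same route as the paper: a case split over the four rays $\R$, $\ii\R$, $\gamma\R$, $\bar\gamma\R$, handling the real axis via the strict monotonicity of $s$ on $(-r,r)$, the imaginary axis via Theorem~\ref{i}, the diagonal $\gamma\R$ via the strict monotonicity of the auxiliary function $f$, and the conjugate diagonal via Theorem~\ref{real}. Your added remark that $s'$ is zero-free on all of $B_r(0)$ (since $|s|<1$ there forces $s^4\neq 1$) is a nice way to justify the positivity of $s'$ on the real diameter, matching the paper's discussion preceding the theorem.
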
 

\begin{proof} 
Let $z \in B_r(0)$. As $s$ is strictly increasing on $(-r, r)$ and vanishes at zero, the claim holds when $z$ is real; it holds when $z$ is purely imaginary by Theorem \ref{i}. The function $f : (- 2 r, 2 r) \to \R$ considered immediately prior to the present theorem has strictly positive derivative and is therefore strictly increasing; in particular, if $t \in (- r, r)$ then $f(t)$ is a positive multiple of $t$ and $s(\gamma t)$ is a positive multiple of $\gamma t$. This covers behaviour on $\gamma \R$; behaviour on $\overline{\gamma} \R$ follows similarly or by Theorem \ref{real}. 
\end{proof} 

\medbreak 

Note from the proof that $s$ is injective along the real and imaginary diameters and their angle bisectors. 

\medbreak 

The following result often serves as a foundation from which to develop the lemniscatic sine. 

\medbreak 

\begin{theorem} \label{integral} 
If $z \in B_r (0)$ then 
$$z = \int_0^{s(z)} \frac{{\rm d} \sigma \; \; \; }{(1 - \sigma^4)^{1/2}}\, .$$
\end{theorem}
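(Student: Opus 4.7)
The plan is to recognize the right-hand side as the antiderivative, via the change of variables $\sigma = s(w)$, of the constant function $1$, so that it must equal $z$ once we verify that the branch of the square root on the right matches $s\,'$.

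First I would note that, because $s$ maps $B_r(0)$ into $B_1(0)$, and $B_1(0)$ is simply connected, the holomorphic integrand $(1 - \sigma^4)^{-1/2}$ (with the principal branch, which is unambiguous here since $1 - \sigma^4$ lies in the open disc $B_1(1)$ when $|\sigma| < 1$, hence never touches the non-positive real axis) admits a single-valued primitive
$$G(w) = \int_0^w \frac{\rd \sigma \; \; \; }{(1 - \sigma^4)^{1/2}}$$
on $B_1(0)$, with $G(0) = 0$ and $G\,'(w) = (1 - w^4)^{-1/2}$. Setting $F = G \circ s$ yields a holomorphic function on $B_r(0)$ whose derivative is
$$F\,'(z) = \frac{s\,'(z)}{(1 - s(z)^4)^{1/2}}\, .$$

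The key step is then to show that $s\,'(z)$ coincides, on $B_r(0)$, with the \emph{principal} branch of $(1 - s(z)^4)^{1/2}$. Both of these functions are continuous on the connected set $B_r(0)$; both square to $1 - s(z)^4$ (using $(s\,')^2 = 1 - s^4$ from the construction of $s$); and both take the value $1$ at $z = 0$. Since the values of $1 - s(z)^4$ lie throughout in the right half-plane $B_1(1)$, the principal square root is continuous there, so the two continuous square roots that agree at a point must agree throughout $B_r(0)$. Hence $F\,'(z) \equiv 1$, and as $F(0) = G(s(0)) = G(0) = 0$, we conclude $F(z) = z$, which is the claim.

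The only real obstacle is precisely this branch-matching issue in the middle step; once one observes that $s(B_r(0)) \subset B_1(0)$ keeps $1 - s^4$ away from the branch cut, the identification of $s\,'$ with the principal root is immediate from continuity together with the initial value $s\,'(0) = 1$. Everything else is the standard chain-rule reversal of an antiderivative.
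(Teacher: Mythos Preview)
Your proof is correct and follows essentially the same route as the paper: form the composite $F(z)=\int_0^{s(z)}(1-\sigma^4)^{-1/2}\,\rd\sigma$, use the chain rule together with $s'=(1-s^4)^{1/2}$ to see $F'\equiv 1$, and conclude from $F(0)=0$. The paper is simply terser, taking the branch identification for granted from the defining initial value problem for $s$, whereas you spell out explicitly why $s'$ coincides with the principal root on $B_r(0)$.
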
 

\begin{proof} 
The function $s$ maps $B_r (0)$ to the open unit disc, in which the function $\sigma \mapsto (1 - \sigma^4)^{-1/2}$ is holomorphic. By the chain rule, the holomorphic composite  
$$B_r (0) \to \CC : z \mapsto \int_0^{s(z)} \frac{{\rm d} \sigma \; \; \; }{(1 - \sigma^4)^{1/2}}$$
has derivative constantly one; and the composite vanishes at zero. 
\end{proof} 

\medbreak  

As a consequence, we have a converse to Theorem \ref{X} on $B_r (0)$. 

\medbreak 

\begin{theorem} \label{XX}
Let $z \in B_r(0)$: if $s(z)^4 \in \R$ then $z^4 \in \R$. 
\end{theorem}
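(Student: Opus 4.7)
The plan is to invoke the integral representation of Theorem \ref{integral} and evaluate it along a carefully chosen straight-line path. The integrand $\sigma \mapsto (1-\sigma^4)^{-1/2}$, with principal branch of the square-root, is holomorphic throughout the simply connected disc $B_1(0)$: for $|\sigma| < 1$ the quantity $1 - \sigma^4$ stays away from zero, permitting a single-valued holomorphic square-root that equals $1$ at $\sigma = 0$. By Cauchy's theorem, the integral from $0$ to any $w \in B_1(0)$ is then independent of the path of integration within $B_1(0)$.

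Supposing $s(z)^4 \in \R$, I would write $w = s(z) \in B_1(0)$, so that $w = \eta\, t$ for some $t \in \R$ and some $\eta \in \{1, \ii, \gamma, \overline{\gamma}\}$. The plan is to evaluate the integral along the straight segment $\sigma = \eta u$ with $u$ running between $0$ and $t$. A direct calculation yields $1 - \sigma^4 = 1 - \eta^4 u^4 = 1 \mp u^4$, which is a positive real on the path (since $\eta^4 = \pm 1$ and $|u| < 1$); the principal branch of $(1-\sigma^4)^{1/2}$ therefore coincides there with the positive real square-root. Theorem \ref{integral} becomes
$$z = \int_0^w \frac{{\rm d}\sigma}{(1-\sigma^4)^{1/2}} = \eta \int_0^t \frac{{\rm d} u}{\sqrt{1 \mp u^4}},$$
exhibiting $z$ as a real multiple of $\eta$. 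Raising to the fourth power gives $z^4 = \eta^4 \cdot (\text{real})^4 \in \R$, as required.

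The argument is almost entirely bookkeeping; the one point deserving care is the simultaneous justification of path-independence within $B_1(0)$ and the identification of the principal branch of $(1-\sigma^4)^{1/2}$ with the positive real square-root along each of the four radii. Both are transparent here, so no substantial obstacle is anticipated.
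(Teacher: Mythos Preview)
Your argument is correct and follows essentially the same route as the paper: invoke the integral representation of Theorem~\ref{integral}, evaluate along the radial segment from $0$ to $s(z)$, and observe that the integrand is positive real there because $s(z)^4 \in (-1,1)$. The only cosmetic difference is that you introduce $\eta \in \{1,\ii,\gamma,\overline{\gamma}\}$ explicitly, whereas the paper parametrizes by $\sigma = s(z)\,t$ for $t \in [0,1]$ and handles all four directions at once; both yield $z$ as a real multiple of $s(z)$.
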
 

\begin{proof} 
Fix $z \in B_r (0)$ such that $s(z)^4 \in \R$; note that $s(z)^4 \in (- 1, 1)$ in fact. Evaluate the integral in Theorem \ref{integral} along the line segment $[0, s(z)]$: with $\sigma = s(z) t$ we have 
$$z = \int_0^1 \frac{s(z){\rm d} t \; \; \; }{(1 - s(z)^4 t^4)^{1/2}} = \Big[\int_0^1 \frac{{\rm d} t \; \; \; }{(1 - s(z)^4 t^4)^{1/2}}\Big] \: s(z)$$
where the integral within $\big[ \dots \big]$ is a strictly positive real number. 
\end{proof} 

\medbreak 

Recall the function $f : (- 2 r, 2 r) \to \R$ defined ahead of Theorem \ref{X+}. 

\medbreak 

\begin{theorem} \label{K}
There exists a unique $K \in (0, 1)$ such that $f(K) = 1$. 
\end{theorem}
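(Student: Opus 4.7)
The plan is to deduce this from the elementary properties of $f$ already recorded: namely, that $f$ is real-valued on $(-2r, 2r)$ with $f(0) = 0$ and satisfies the differential equation $f\,' = (1 + f^4)^{1/2}$, where the square-root is at least unity throughout the interval. Since $2r = \sqrt{2} > 1$, the point $1$ lies in the domain of $f$, so we may meaningfully speak of $f(1)$.

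First I would establish that $f$ is strictly increasing on $(-2r, 2r)$ and that $f(t) > t$ for $t \in (0, 2r)$. Strict monotonicity is immediate from $f\,' \geq 1 > 0$. For the strict inequality $f(t) > t$, note that for $0 < t < 2r$ the value $f(t)$ is positive (by strict monotonicity and $f(0) = 0$), hence $f(t)^4 > 0$, hence $f\,'(t) = (1 + f(t)^4)^{1/2} > 1$; integrating over $[0, t]$ yields $f(t) > t$. In particular $f(1) > 1$.

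With $f(0) = 0 < 1 < f(1)$ in hand, continuity of $f$ together with the Intermediate Value Theorem furnishes some $K \in (0, 1)$ with $f(K) = 1$; strict monotonicity ensures that such $K$ is unique. No step here presents any obstacle: the whole statement is essentially a bookkeeping consequence of the differential inequality $f\,' \geq 1$ together with the observation that the interval $(0, 2r) = (0, \sqrt{2})$ properly contains $(0, 1)$.
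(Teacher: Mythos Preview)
Your argument is correct and follows essentially the same route as the paper's own proof: both use $f\,' = (1 + f^4)^{1/2} \geqslant 1$ together with $f(0) = 0$ to force $f$ past the value $1$ within the interval $(0,\sqrt{2})$, invoking the intermediate value theorem for existence and strict monotonicity for uniqueness. Your version is in fact more explicit than the paper's, spelling out the strict inequality $f(t) > t$ (via $f > 0 \Rightarrow f\,' > 1$ on $(0,2r)$) to pin down $K$ strictly inside $(0,1)$.
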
 

\begin{proof} 
The function $f$ on the interval $(- 2 r, 2 r) = (- \sqrt{2}, \sqrt{2})$ has derivative $f\,' = (1 + f^4)^{1/2}$ with value at least unity; as $f > 0$ on $(0, \sqrt{2})$ the intermediate value theorem concludes the argument. 
\end{proof} 

\medbreak 

From its definition, $K$ may be expressed as a definite integral, thus 
$$K = \int_0^1 \frac{{\rm d} \tau \; \; \; }{(1 + \tau^4)^{1/2}} = 0.92703733865... \: .$$

\medbreak 

It now follows that $s(\pm K \gamma) = \pm \gamma$ and $s(\pm K \overline{\gamma}) = \pm \overline{\gamma}$ : in particular, $s^4 = -1$ at each of the four points $\pm K \gamma$ and $\pm K \overline{\gamma}$ in $B_{2 r} (0)$. 

\medbreak 

We are now in a position to confirm that we have located all the points in the disc $B_{2 r} (0)$ at which $s^4 = -1$. 

\medbreak 

\begin{theorem} \label{s4}
The points in $B_{2 r} (0)$ at which $s^4 = -1$ are precisely $\pm K \gamma$ and $\pm K \overline{\gamma}$. 
\end{theorem}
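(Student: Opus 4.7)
The strategy is to use the duplication formula to reduce the condition $s(z)^4 = -1$ on $B_{2r}(0)$ to a polynomial equation in $v = s(w)^4$, where $w = z/2 \in B_r(0)$, and then to pin $v$ down using the constraint $|v| < 1$. Descent from $v$ back to $z$ then proceeds via Theorem \ref{XX} and the behaviour of $s$ on the four diagonals.

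First I would set $w = z/2$ and apply the duplication formula together with $(s\,'(w))^2 = 1 - s(w)^4$ to convert $s(2w)^4 = -1$ into
\begin{equation*}
16\, a^4 (1 - a^4)^2 + (1 + a^4)^4 = 0,
\end{equation*}
where $a = s(w)$. In the variable $v = a^4$ this is a palindromic quartic; the substitution $u = v + 1/v$ reduces it to $u^2 + 20 u - 28 = 0$, with roots $u = -10 \pm 8 \sqrt{2}$. For each value of $u$ the accompanying equation $v^2 - u v + 1 = 0$ has two roots whose product is $1$. The constraint $|v| = |a|^4 < 1$ is the decisive filter: the value $u = -10 + 8\sqrt{2}$ yields a pair of complex conjugate roots of unit modulus and is excluded, while $u = -10 - 8\sqrt{2}$ yields two real negative roots, of which exactly one (call it $v_0$) lies in $(-1, 0)$. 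Thus $s(w)^4 = v_0 \in \R$ is forced.

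Now I would invoke Theorem \ref{XX}: since $s(w)^4 \in \R$ with $w \in B_r(0)$, necessarily $w^4 \in \R$, so $w$ lies on one of $\R$, $\ii \R$, $\gamma \R$, $\overline{\gamma} \R$. On $\R$ (respectively $\ii \R$) the established symmetries make the values of $s$ real (respectively purely imaginary), so $s(w)^4 \geq 0$, contradicting $v_0 < 0$; both axes are excluded. On $\gamma \R$ the parameterization $w = \gamma t$ gives $s(w)^4 = \gamma^4 f(t)^4 = - f(t)^4$, so the condition reads $f(t)^4 = |v_0|$; strict monotonicity of $f$ (its derivative is at least unity) pins $|t|$ down uniquely, and the two signs yield two values $w = \pm \gamma t_0$, with the analogous pair on $\overline{\gamma} \R$. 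Hence $z = 2 w$ admits at most four solutions in $B_{2r}(0)$. Since the four points $\pm K \gamma$ and $\pm K \overline{\gamma}$ noted immediately above the theorem already supply four solutions, these must be precisely the zero set.

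The main obstacle is recognizing and exploiting the palindromic structure of the quartic: without the substitution $u = v + 1/v$, ruling out the unwanted roots by their moduli would be cumbersome. Everything after the polynomial analysis is essentially bookkeeping using Theorem \ref{XX}, the established symmetries, and the monotonicity of the auxiliary function $f$ introduced ahead of Theorem \ref{X+}.
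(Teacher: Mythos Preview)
Your proof is correct and follows essentially the same route as the paper's: reduce via the duplication formula to a quartic in $s(w)^4$, use $|s(w)^4|<1$ to isolate a single admissible root, then invoke Theorem~\ref{XX} together with injectivity along the diagonal rays to bound the number of solutions by four. The only cosmetic differences are that the paper factors the quartic directly as a difference of squares rather than through the palindromic substitution $u=v+1/v$, and that it passes straight to the diagonals (the computed root being visibly negative) where you explicitly rule out the coordinate axes first.
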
 

\begin{proof} 
We have seen that $s^4 = -1$ at each of the four listed points. Let $z \in B_{2 r} (0)$ satisfy $s(z)^4 = -1$. Write $z = 2 w$ with $w \in B_r (0)$. From the duplication formula for $s$ and the formula $(s\,')^2 = 1 - s^4$ we deduce that   
$$- 1 = s(2 w)^4 = \Big( \frac{2 s(w) s\,'(w)}{1 + s(w)^4} \Big)^4 = 16 \frac{s(w)^4 (1 - s(w)^4)^2}{(1 + s(w)^4)^4}$$
and therefore that $s(w)^4$ is a zero of the quartic  
$$q(\sigma) = (\sigma + 1)^4 + 16 \sigma (\sigma- 1)^2 , $$
which quartic factors thus 
$$q(\sigma) = (\sigma^2 + 10 \sigma + 1)^2 - 128 \sigma^2 = (\sigma^2 + (10 - 8 \sqrt{2}) \sigma + 1)(\sigma^2 + (10 + 8 \sqrt{2}) \sigma + 1).$$ 
Here, the first quadratic factor has both of its roots on the unit circle, while the second has negative real roots that straddle the unit circle. Thus, $s(w)^4 \in B_1(0)$  can have only one value: explicitly, $2 \sqrt{14 + 10 \sqrt{2}} - 5 - 4 \sqrt{2}$. Theorem \ref{XX} now places $s(w)$ on one of the four radii of the disc $B_r (0)$ that pass through the points $\pm \gamma$ and $\pm \overline{\gamma}$. As we noted after Theorem \ref{X+} that $s$ is injective on each of these radii, we deduce that $w$ (hence $z$ also) can have only four values. We conclude that our list of points in $B_{2 r} (0)$ at which $s^4 = -1$ is complete. 
\end{proof} 

\medbreak

At last we may now return to the task of reduplication, with the following result. 

\medbreak 

\begin{theorem} \label{4K} 
The holomorphic function $s$ on $B_r (0)$ extends to a function that is holomorphic in the open disc $B_{4 r} (0)$ except for simple poles at the four points $\pm 2 K \gamma$ and $\pm 2 K \overline{\gamma}$. 
\end{theorem}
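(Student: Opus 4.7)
The plan is to reduplicate by the same formula used in the previous extension: for $w \in B_{4r}(0)$, write $w = 2z$ with $z \in B_{2r}(0)$ and set
$$S(w) = \frac{2 s(z) s\,'(z)}{1 + s(z)^4}\, .$$
Since $s$ is already known to be holomorphic on $B_{2r}(0)$, the right-hand side is holomorphic in $z$ wherever its denominator does not vanish. Theorem \ref{s4} tells us precisely where the denominator vanishes in $B_{2r}(0)$: at the four points $\pm K \gamma$ and $\pm K \overline{\gamma}$. Thus $S$ is holomorphic on $B_{4r}(0)$ away from the four doubled points $\pm 2K\gamma$ and $\pm 2K\overline{\gamma}$.

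Next I would verify that $S$ is genuinely an extension of $s$, i.e., $S = s$ on $B_{2r}(0)$. On the smaller disc $B_r(0)$ this is exactly the already-established duplication formula for $s$; since both $S$ and $s$ are holomorphic on the connected set $B_{2r}(0)$ and agree on $B_r(0)$, the Identity Theorem supplies the equality throughout $B_{2r}(0)$. Once this is in hand, I will drop the capitalization and simply write $s$ for the resulting holomorphic function on $B_{4r}(0) \setminus \{\pm 2K\gamma, \pm 2K\overline{\gamma}\}$.

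It remains to check that each of the four excluded points is in fact a simple pole rather than a removable singularity or pole of higher order. Fix $z_0 \in \{\pm K\gamma, \pm K\overline{\gamma}\}$, so $s(z_0)^4 = -1$. From $(s\,')^2 = 1 - s^4$ (which persists on $B_{2r}(0)$ by the Identity Theorem, as this relation already holds on $B_r(0)$) we get $s\,'(z_0)^2 = 2$, hence $s\,'(z_0) \neq 0$. Consequently the derivative of the denominator, $(1 + s^4)\,' = 4 s^3 s\,'$, is nonzero at $z_0$: so $1 + s^4$ has a simple zero at $z_0$. Meanwhile the numerator $2 s s\,'$ is nonzero at $z_0$ (since $s(z_0)^4 = -1$ forces $s(z_0) \neq 0$). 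The duplication formula therefore exhibits $S$ as having a simple pole at $w = 2 z_0$, as required.

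The only step with any subtlety is the identification $S = s$ on $B_{2r}(0)$, and that is handled cleanly by the Identity Theorem. The remaining work is the local analysis of numerator and denominator at the four bad points, which is routine once Theorem \ref{s4} and the first-order relation $(s\,')^2 = 1 - s^4$ are in hand.
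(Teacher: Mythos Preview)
Your proposal is correct and follows essentially the same reduplication approach as the paper. The one minor difference is in how you identify $S$ with $s$ on $B_{2r}(0)$: the paper re-verifies directly that $(S')^2 = 1 - S^4$ with $S(0)=0$, $S'(0)=1$ and then invokes the uniqueness clause of Theorem~\ref{s}, whereas you appeal to the already-established duplication formula and the Identity Theorem; both routes are valid, and yours has the mild advantage of not repeating the ``moderately tedious'' calculation. Your explicit check that the four singularities are simple poles (via $(s')^2 = 1 - s^4$ giving $s'(z_0)\neq 0$ and hence a simple zero of $1+s^4$ against a nonzero numerator) spells out what the paper leaves as ``readily checked''.
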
 

\begin{proof} 
Reduplication. With $s : B_{2 r} (0) \to \CC$ as defined at the start of this section, define $S$ in $B_{4 r} (0) \setminus \{ \pm 2 K \gamma, \: \pm 2 K \overline{\gamma}\}$ by declaring that if $z \in B_{2 r} (0) \setminus \{ \pm K \gamma, \: \pm K \overline{\gamma}\}$ then 
$$S(2 z) = \frac{2 s(z) s\,'(z)}{1 + s(z)^4}\, .$$
After Theorem \ref{s4} we know that $S$ is holomorphic. Direct calculation shows that $(S\,')^2 = 1 - S^4$ along with $S(0) = 0$ and $S\,'(0) = 1$. It therefore follows from Theorem \ref{s} that the restriction of $S$ to $B_r (0)$ is $s$. Finally, the singularities of $S$ at the four indicated points are indeed simple poles, as is readily checked.  

\end{proof} 

\medbreak 

As usual, we shall continue the notation $s$ for the extended function. Notice that $B_{2 K} (0)$ is the largest disc about $0$ on which the function $s$ is holomorphic. 

\medbreak 

\begin{theorem} \label{L}
There exists a unique $L \in [0, \tfrac{1}{2} \pi]$ such that $s(L) = 1$. 
\end{theorem}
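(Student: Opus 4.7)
Set $L := \int_0^1 \mathrm{d}\sigma/\sqrt{1-\sigma^4}$; this integral is finite (the integrand is $O((1-\sigma)^{-1/2})$ near $\sigma=1$), and the comparisons $\sqrt{1-\sigma^4} \geq \sqrt{1-\sigma^2}$ and $\sqrt{1-\sigma^4} \leq 1$ on $[0,1]$ give $L \in [1, \pi/2]$. Since Theorem \ref{4K} places all poles of the extended $s$ off the real axis, $s$ is real-analytic on $(-2\sqrt{2}, 2\sqrt{2})$, an interval that contains $[0, \pi/2]$.

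For existence, I would let
$$T := \sup\bigl\{t \in [0, 2\sqrt{2}) : s(u) \in [0, 1) \text{ for every } u \in [0, t]\bigr\};$$
then $T \geq r$ by the strict increase of $s$ on $(-r, r)$ noted after Theorem \ref{X}. Throughout $[0, T)$ the identity $(s\,')^2 = 1 - s^4 > 0$, together with $s\,'(0) = 1$ and the non-vanishing of $s\,'$, forces $s\,' = +\sqrt{1-s^4}$; so $s$ strictly increases on $[0, T)$ and direct integration (as in Theorem \ref{integral}) yields $t = \int_0^{s(t)} \mathrm{d}\sigma/\sqrt{1-\sigma^4}$ there. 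Writing $a := \lim_{t \to T^-} s(t) \in (0, 1]$, this gives $T = \int_0^a \mathrm{d}\sigma/\sqrt{1-\sigma^4}$. If $a < 1$, continuity gives $s(T) = a < 1$ and the defining condition for $T$ would persist on $[0, T + \varepsilon)$, contradicting the supremum. Hence $a = 1$, $T = L$, and $s(L) = 1$ by continuity.

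For uniqueness on $[0, \pi/2]$, the strict monotonicity above already shows $s < 1$ on $[0, L)$. At $t = L$, the relations $(s\,')^2 = 1 - s^4$ and $s\,'' = -2 s^3$ give $s\,'(L) = 0$ and $s\,''(L) = -2 < 0$, so $s\,'$ becomes negative just after $L$. Since $s\,'$ can next vanish only where $s = \pm 1$, the analogous substitution $\int_{-1}^1 \mathrm{d}\sigma/\sqrt{1-\sigma^4} = 2L$ shows $s$ strictly decreases from $1$ at $t = L$ to $-1$ at $t = 3L$. As $L \geq 1$ gives $3L > \pi/2$, we obtain $s(t) < 1$ on $(L, \pi/2]$, so $L$ is the unique solution of $s = 1$ in $[0, \pi/2]$.

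The principal obstacle is justifying that the integral representation of Theorem \ref{integral} persists along the real axis beyond $B_r(0)$ up to the first point at which $s$ attains $1$; this is exactly what the supremum/continuation argument in the second paragraph is designed to deliver, and it simultaneously supplies the strict monotonicity needed for uniqueness.
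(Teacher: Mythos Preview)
Your argument is correct and follows essentially the paper's approach: the existence proof is the same supremum-plus-integral-comparison idea (the paper bounds the supremum by $\tfrac12\pi$ via $\int_0^{s(t)}(1-\sigma^4)^{-1/2}\,\rd\sigma < \int_0^{s(t)}(1-\sigma^2)^{-1/2}\,\rd\sigma$), and you supply the uniqueness details that the paper explicitly leaves as an exercise. One cosmetic caution: at this point in the paper $s$ is only known on $B_{4r}(0)=B_{2\sqrt2}(0)$, and $3L\approx 3.93>2\sqrt2$, so your phrase ``$s$ strictly decreases \dots\ to $-1$ at $t=3L$'' overshoots the available domain; however your actual conclusion only uses the strict decrease on $(L,\tfrac12\pi]\subset(-2\sqrt2,2\sqrt2)$, where the argument is fully justified.
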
 

\begin{proof} 
Let $t$ in $(0, 4 r)$ be such that $s(u) < 1$ whenever $0 < u < t$: by continuation of Theorem \ref{integral} it follows that 
$$t = \int_0^{s(t)}  \frac{{\rm d} \sigma \; \; \; }{(1 - \sigma^4)^{1/2}} < \int_0^{s(t)}  \frac{{\rm d} \sigma \; \; \; }{(1 - \sigma^2)^{1/2}} <  \frac{1}{2} \pi;$$ 
the set of all such $t$ thus has $\frac{1}{2} \pi$ as an upper bound, so by continuity its supremum $L \leqslant \frac{1}{2} \pi$ is a point at which $s$ takes the value $1$. This settles existence; uniqueness is an exercise (or see below). 
\end{proof} 

\medbreak 

From its definition, $L$ may be expressed as a definite integral, thus 
$$L = \int_0^1 \frac{{\rm d} \sigma \; \; \; }{(1 - \sigma^4)^{1/2}} = 1.311028777146... \: .$$

\medbreak 

As a matter of fact, $K$ and $L$ are related quite simply: thus 
$$L = \sqrt2 K.$$ 
In the form 
$$\int_0^1 \frac{{\rm d} \sigma \; \; \; }{(1 - \sigma^4)^{1/2}} = \sqrt2 \int_0^1 \frac{{\rm d} \tau \; \; \; }{(1 + \tau^4)^{1/2}}$$
this may be verified by the substitution 
$$\sigma^2 = \frac{2 \tau^2}{1 + \tau^4}.$$
We may instead deduce this relationship from the following result, which is a special case of the addition formula for the lemniscatic sine and essentially amounts to a further examination of the function $f$ that was introduced prior to Theorem \ref{X+}; as usual, the displayed identity will continue to hold (except on a corresponding discrete set) when $s$ is meromorphically extended. 

\medbreak 

\begin{theorem} \label{KL}
The identity 
$$s ( \sqrt2 \, \overline{\gamma} \, z)^2 = - 2 \ii \, \frac{s(z)^2}{1 - s(z)^4}$$ 
holds whenever $z \in B_{4 r} (0)$ is not one of the eight points 
$$\pm 2 K \gamma, \: \pm 2 K \overline{\gamma}, \: \pm L, \: \pm L \ii.$$
\end{theorem}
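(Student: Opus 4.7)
My plan is to show that the holomorphic square roots of the two sides of the identity coincide in a neighbourhood of $0$ by a Picard uniqueness argument, and then to propagate the equality throughout the stated domain by the Identity Theorem. The crucial observation is that $-2 \ii = (\sqrt{2} \, \overline{\gamma})^2$, so the claimed identity is naturally the square of a first-order identity between holomorphic functions.

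Concretely, I would introduce on a small disc about $0$ the two holomorphic functions
$$\phi(z) = s(\sqrt{2} \, \overline{\gamma} \, z) \quad \text{and} \quad \psi(z) = \sqrt{2} \, \overline{\gamma} \, \frac{s(z)}{(1 - s(z)^4)^{1/2}},$$
with the principal branch of the square root (unambiguous near $0$ because $s(0)^4 = 0$). Both satisfy $\phi(0) = \psi(0) = 0$ and $\phi\,'(0) = \psi\,'(0) = \sqrt{2} \, \overline{\gamma}$, the value of $\psi\,'(0)$ being read off from $s(z) = z + O(z^5)$. I would then verify that both functions solve the second-order initial value problem
$$w\,'' = 4 \ii \, w^3; \quad w(0) = 0, \; w\,'(0) = \sqrt{2} \, \overline{\gamma}.$$
For $\phi$ the chain rule together with $s\,'' = -2 s^3$ yields at once $\phi\,''(z) = 2 \, \overline{\gamma}^{\,2} \, s\,''(\sqrt{2} \, \overline{\gamma} \, z) = -2 \ii \cdot (-2 \phi^3) = 4 \ii \, \phi^3$. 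For $\psi$ I would first verify the first-order form $(\psi\,')^2 = -2 \ii (1 - \psi^4)$: direct differentiation, using $(s\,')^2 = 1 - s^4$, gives $\psi\,' = \sqrt{2} \, \overline{\gamma} \, (1 + s^4)/(1 - s^4)$, and squaring, in conjunction with the algebraic identity $1 - \psi^4 = (1 + s^4)^2/(1 - s^4)^2$, delivers the claim; differentiating this then yields $\psi\,'' = 4 \ii \, \psi^3$.

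Since the right-hand side of this initial value problem is polynomial in $w$, the Picard existence-uniqueness theorem (applied to the equivalent first-order system, as envisaged in the remark following Theorem \ref{s}) produces a unique holomorphic solution on some disc about $0$. Hence $\phi = \psi$ there, and so $\phi^2 = \psi^2$, which is precisely the stated identity in a neighbourhood of $0$. To conclude, observe that the right-hand side $V(z) = -2 \ii \, s(z)^2/(1 - s(z)^4)$ is meromorphic on $B_{4 r}(0)$: the four poles $\pm 2 K \gamma, \pm 2 K \overline{\gamma}$ of $s$ are removable for $V$ (which vanishes there), while the vanishing of $1 - s^4$ at the four points $\pm L, \pm L \ii$ (using Theorem \ref{L} together with Theorem \ref{parity} and Theorem \ref{i}) produces genuine simple poles. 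The complement of these eight points in $B_{4 r}(0)$ is connected, so the Identity Theorem promotes the local identity to the asserted one, with the left-hand side being interpreted by meromorphic extension via its equality with $V$ wherever $s(\sqrt{2} \, \overline{\gamma} \, z)$ is not already in place. The principal obstacle is the algebraic bookkeeping for $\psi$; no new idea is required beyond repeated use of $(s\,')^2 = 1 - s^4$.
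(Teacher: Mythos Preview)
Your argument is correct and is essentially the paper's own proof: the paper defines $S(z)=\sqrt2\,\overline{\gamma}\,s(\tfrac{\gamma}{\sqrt2}z)/(1-s(\tfrac{\gamma}{\sqrt2}z)^4)^{1/2}$, checks that $S$ satisfies the first-order initial value problem of Theorem~\ref{s}, concludes $S=s$ on $B_r(0)$, and then squares and invokes the Identity Theorem---which is your $\phi=\psi$ argument after the substitution $z\mapsto\tfrac{\gamma}{\sqrt2}z$, the only cosmetic difference being that the rescaling lets the paper appeal to Theorem~\ref{s} directly rather than to a separate second-order problem. One small slip: the singularities of $V$ at $\pm L,\pm L\ii$ are double poles, not simple (since $s'(L)=0$ forces $1-s^4$ to vanish to second order there), though this does not affect your argument.
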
 

\begin{proof} 
Define $S : B_1(0) \to \CC$ by the rule 
$$S(z) = \sqrt2 \, \overline{\gamma} \, \frac{s(\frac{\gamma}{\sqrt2} z)}{(1 - s(\frac{\gamma}{\sqrt2} z)^4)^{1/2}}.$$
Here, the principal square-root is taken and $S$ is holomorphic because $|s| < 1$ on $B_r (0)$. By direct calculation, $S$ satisfies the initial value problem `$S\,' = (1 - S^4)^{1/2}, \; S(0) = 0$' and so agrees with $s$ on $B_r (0)$ by Theorem \ref{s}. After squaring and a change of variable, the Identity Theorem ensures that the relation 
$$(1 - s(z)^4) \, s ( \sqrt2 \, \overline{\gamma} \, z)^2 = - 2 \ii s(z)^2$$
holds for all $z$ in the (connected) common domain of $s$ and the function $s(\sqrt2 \, \overline{\gamma} \, \bullet)$. As developed thus far, this common domain is $B_{4 r} (0)$ less the eight listed points.  
\end{proof} 

\medbreak 

Evaluation of the identity in Theorem \ref{KL} at the point $z = K \gamma$ recovers the identity 
$$L = \sqrt2 \, K.$$
In spite of this identity we shall retain both $K$ and $L$, as each leads to simplifications: as we have seen, $2 K$ is the radius of the largest disc about $0$ on which $s$ is holomorphic; as we shall see, the fully extended $s$ has $4 L$ as a period. 

\medbreak 

We now prepare to hand over the continued extension of $s$ to the Schwarz Reflexion Principle. Let us write $U$ for the open lune with axis the interval $(2 K \overline{\gamma}, 2 K \gamma)$ centred at $L$ and having the real points $2 K$ and $2 L - 2 K$ on its boundary. 

\medbreak 

\begin{theorem} \label{lune}
If $z$ lies in the open lune $U$ then $s( 2 L - z) = s(z)$. 
\end{theorem}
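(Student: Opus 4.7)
The plan is to realize $F(z) := s(2L - z)$ as a second solution of the same second-order initial value problem that $s$ satisfies at $z = L$, and then to invoke Picard uniqueness followed by the Identity Theorem on the connected open set $U$.

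The first task is to clarify the geometry of $U$. Both circles $|w| = 2K$ and $|w - 2L| = 2K$ pass through the axis endpoints $2K\gamma$ and $2K\overline{\gamma}$: for the second circle, one uses $\gamma - \sqrt{2} = (-1 + \ii)/\sqrt{2}$, whence $|\gamma - \sqrt{2}| = 1$, together with $2L = 2\sqrt{2}\, K$. Hence
\[
U = B_{2K}(0) \cap B_{2K}(2L),
\]
and the point-reflection $z \mapsto 2L - z$ swaps these two open discs, carrying $U$ bijectively to itself and fixing only the centre $L$. The four poles of $s$ sit on the circle $|w| = 2K$ rather than inside it, so $s$ is holomorphic on $U \subset B_{4r}(0)$; consequently $F$ is also holomorphic on $U$.

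Next I would extend the identities $(s\,')^2 = 1 - s^4$ and $s\,'' = -2 s^3$ from $B_r (0)$ to all of $U$ by the Identity Theorem, both sides being holomorphic throughout $U$. Theorem \ref{L} gives $s(L) = 1$, which combined with the first identity forces $s\,'(L) = 0$. Then $F(L) = 1$ and $F\,'(L) = - s\,'(L) = 0$, while the chain rule yields $F\,'' = -2 F^3$. Thus $F$ and $s$ both solve the common second-order initial value problem
\[
u\,'' = -2 u^3, \; \; u(L) = 1, \; u\,'(L) = 0.
\]
Since $-2 u^3$ is polynomial in $u$, Picard's existence-uniqueness theorem (explicitly flagged in the remarks following Theorem \ref{IVP} as being applicable to equations of this form) delivers $F = s$ on a neighbourhood of $L$; the Identity Theorem on the connected open set $U$ then promotes this to $F = s$ throughout $U$, which is the asserted identity.

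The subtlety worth anticipating is that the first-order identity $(s\,')^2 = 1 - s^4$ alone is not strong enough at $L$: the vanishing of $s\,'(L)$ places $L$ at a branch point of the square-root $(1 - u^4)^{1/2}$, so the Lipschitz condition for the first-order Picard argument fails there. It is precisely the ascent to the polynomial second-order equation $u\,'' = -2 u^3$ that restores the clean uniqueness needed to identify $F$ with $s$.
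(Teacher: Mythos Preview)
Your proof is correct and follows essentially the same route as the paper's: define $F(z)=s(2L-z)$, check that both $F$ and $s$ satisfy the second-order initial value problem $u''=-2u^3$, $u(L)=1$, $u'(L)=0$, apply Picard uniqueness near $L$, and finish with the Identity Theorem on the connected set $U$. Your additional geometric verification that $U=B_{2K}(0)\cap B_{2K}(2L)$ and your closing remark on why the first-order equation would not suffice are welcome elaborations but do not change the underlying argument.
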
 

\begin{proof} 
Note that the map $z \mapsto 2 L - z$ leaves invariant the lune $U$ and fixes its centre $L$. Define $S : U \to U$ by the rule $S(z) = s(2 L - z)$: then $S\,' (z) = - s\,'(2 L - z)$ and 
$$S\,'' (z) = s\,''(2 L - z) = - 2 s(2 L - z)^3 = - 2 S(z)^3$$
along with $S(L) = s(L) = 1$ and $S\,'(L) = - s\,' (L) = 0$. Picard gives the second-order initial value problem `$s\,'' = - 2 s^3; \; s(L) = 1, \, s\,'(L) = 0$' a unique solution near $L$; an application of the Identity Theorem ends the proof. 
\end{proof} 

\medbreak 

Of course, if we knew that $s$ extends meromorphically to the plane, this result (in conjunction with the Identity Theorem) would imply that the meromorphic extension has $4 L$ as a period, thus: 
$$s(4 L + z) = s(2 L - (z - 2 L)) = s(z - 2 L) = - s(2 L - z) = - s(-z) = s(z).$$

\medbreak 

\begin{theorem} \label{edge}
The function $s$ is real-valued along the interval $(2 K \overline{\gamma}, 2 K \gamma)$. 
\end{theorem}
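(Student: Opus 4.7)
The plan is to combine Theorem \ref{lune} with the conjugation symmetry of Theorem \ref{real}. First, I would identify the geometry: since $L = \sqrt{2}\, K$, we have $2K\gamma = L + \ii L$ and $2K\overline{\gamma} = L - \ii L$, so the interval $(2K\overline{\gamma},\, 2K\gamma)$ is the vertical segment
$$\{\,L + \ii t : t \in (-L, L)\,\},$$
which is exactly the axis of the open lune $U$ and hence lies in $U$. The crucial observation is that on this axis, the involution $z \mapsto 2L - z$ used in Theorem \ref{lune} coincides with complex conjugation: if $z = L + \ii t$ then $2L - z = L - \ii t = \overline{z}$.

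Given this, the verification is a one-line chain. For $z$ in the axis,
$$\overline{s(z)} \;=\; s(\overline{z}) \;=\; s(2L - z) \;=\; s(z),$$
where the first equality is the conjugation symmetry of Theorem \ref{real}, the second is the identification $\overline{z} = 2L - z$ just noted, and the third is Theorem \ref{lune}. This forces $s(z) \in \R$, which is the desired conclusion.

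The only genuine obstacle is confirming that Theorem \ref{real} applies to points on the axis at all, because that theorem was stated for $B_r(0)$ and extended en route to $B_{2r}(0)$, whereas the axis endpoints have modulus $2K > 2r$. The axis itself does lie inside the current domain $B_{4r}(0)\setminus\{\pm 2K\gamma,\pm 2K\overline{\gamma}\}$ of $s$, since $|L + \ii t|^2 \le 2L^2 = 8K^2 < 8 = (4r)^2$, and the pole set is invariant under $z \mapsto \overline{z}$; hence the holomorphic function $z \mapsto s(\overline{z}) - \overline{s(z)}$ vanishes on $B_r(0)$ and, by the Identity Theorem, throughout the connected domain of $s$. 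I would insert a one-sentence remark to this effect (as the paper already anticipates when it writes that the symmetries of $s$ continue to hold after each extension), and then present the three-equality display above as the proof proper.
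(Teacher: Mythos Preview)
Your argument is correct and is exactly the paper's proof: identify $\overline{z} = 2L - z$ on the vertical segment through $L$, then chain $\overline{s(z)} = s(\overline{z}) = s(2L - z) = s(z)$ via Theorem~\ref{real} and Theorem~\ref{lune}. One cosmetic slip: the map $z \mapsto s(\overline{z}) - \overline{s(z)}$ is anti-holomorphic, not holomorphic; to invoke the Identity Theorem cleanly, phrase the extension of Theorem~\ref{real} as the vanishing of the holomorphic function $z \mapsto s(z) - \overline{s(\overline{z})}$ on the conjugation-invariant connected domain.
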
 

\begin{proof} 
If $z \in (2 K \overline{\gamma}, 2 K \gamma)$ then $z$ has real part $L$: thus $\overline{z} = 2 L - z$ and so 
$$\overline{s(z)} = s(\overline{z}) = s (2 L - z) = s(z)$$ 
on account of Theorem \ref{real} and Theorem \ref{lune}. 
\end{proof} 

\medbreak 

Now Theorem \ref{parity} implies that $s$ is real-valued along $(- 2 K \overline{\gamma}, - 2 K \gamma)$, while Theorem \ref{i} implies that $s$ has purely imaginary values along $(- 2 K \overline{\gamma}, 2 K \gamma)$ and along $(- 2 K \gamma, 2 K \overline{\gamma})$. This means that we may apply the Schwarz Reflexion Principle to the function $s$ on the square with vertices $\pm 2 K \overline{\gamma}$ and $\pm 2 K \gamma$.  

\medbreak 

We shall refrain from presenting the full details of this application of the Schwarz Reflexion Principle; however, it is appropriate to mention one or two aspects of the present situation. 

\medbreak 

 Let $S_0$ denote the square with vertices $\pm 2 K \overline{\gamma}$ and $\pm 2 K \gamma$; let $S_1$ denote the square $S_0 + 2 L$ and $S_2$ the square $S_0 + 4 L$ obtained after shifting $S_0$ to the right by $2 L$ and $4 L$ respectively. Let $z_0$ be a point of $S_0$ (other than a vertex): let $z_1 \in S_1$ be the image of $z_0$ under reflexion over the edge $(2 K \overline{\gamma}, 2 K \gamma)$ shared by $S_0$ and $S_1$; let $z_2 \in S_2$ be the image of $z_1$ under reflexion over the edge $(6 K \overline{\gamma}, 6 K \gamma)$ shared by $S_1$ and $S_2$; and notice that $z_2 = z_0 + 4 L$. As $s$ is real-valued on $(2 K \overline{\gamma}, 2 K \gamma)$ it follows that $s(z_1) = \overline{s(z_0)}$; as the reflexion-extended $s$ is real-valued along $(6 K \overline{\gamma}, 6 K \gamma)$ it follows that $s(z_2) = \overline{s(z_1)}$. Thus 
$s(z_0 + 4 L) = s(z_2) = \overline{s(z_1)} = s(z_0)$
and so the extended $s$ has $4 L$ as a period. Similarly, reflexion in imaginary directions shows that $s$ has $4 \ii L$ as a period. 

\medbreak 

This application of the Schwarz Reflexion Principle extends $s$ to a function that is elliptic: it is doubly-periodic, with $4 L$ and $4 \ii L$ as periods (not fundamental - see Section 5); its singularities are simple poles at $\{ \pm 2 K \gamma, \, \pm 2 K \overline{\gamma} \}$ and points congruent modulo periods. This elliptic function is the full lemniscatic sine. 

\medbreak 

Naturally, the various analytic identities that were satisfied by $s$ in its ancestral versions hold also for the full meromorphic extension. For instance, $s$ is `real', odd, equivariant under multiplication by $\ii$, and continues to satisfy the duplication formula that opened the present section. 

\medbreak 

\section{The Lemniscatic Cosine} 

\medbreak 

We now direct further attention towards the lemniscatic cosine function $c$. Recall that in Section 2 we established the existence and elementary properties of $c$ in the open disc $B_r (0)$ of radius $r = 2^{-1/2}$ about $0$. The task of extending the holomorphic function $c$ in this disc to a meromorphic function $c$ in the plane may be accomplished in a variety of ways. We here outline several approaches because of their intrinsic interest, assigning some of the details as exercises; we leave the simplest approach for last, including all of the details as they are so few in number. Naturally, these various approaches lead to the same meromorphic function, by virtue of the Identity Theorem. 

\medbreak 

First of all, we may adapt for $c$ the approach that was taken to extending $s$: that is, we may extend $c$ from $B_r (0)$ by reduplication until the process of extension can be left to the Schwarz Reflexion Principle. For this purpose, we need a duplication formula for $c$ that involves $c$ only: one such formula is 
$$c(2 z) = - \: \frac{c(z)^4 + 2 c(z)^2 - 1}{c(z)^4 - 2 c(z)^2 - 1};$$ 
another is 
$$c(2 z) = \frac{2 c(z)^2 - c\,'(z)^2}{2 c(z)^2 + c\,'(z)^2}.$$
In this approach, poles of $c$ are encountered upon the very first duplication: the formula of Theorem \ref{Py} makes it clear that poles of $c$ coincide with points at which $s = \pm \ii$; the disc $B_{2 r} (0)$ contains two such points, namely $\pm \ii L$. The details of this approach are left as an exercise modelled on Section 3. 

\medbreak 

We may contemplate carrying this approach back to the beginning and attempt to develop $c$ from the initial value problem 
$$(c\,')^2 = 1 - c^4 ; \; \; c(0) = 1.$$ 
Unfortunately, this initial value problem does not have just one solution: along with the lemniscatic cosine, it has as a solution the function with constant value $1$; the Picard existence-uniqueness theorem does not apply, because the requisite Lipschitz condition is not satisfied. However, see the discussion of the simplest approach below. 

\medbreak 

Rather than follow alongside the path by which we extended the lemniscatic sine, we may instead take the extended lemniscatic sine and fashion from it the extended lemniscatic cosine. We proceed to consider three such approaches. 

\medbreak 

We may take a cue from the formula of Theorem \ref{Py}: with the fully extended $s$ in hand, we may define $c$ by starting from the formula 
$$c^2 = \frac{1 - s^2}{1 + s^2}$$
and then passing to a square-root; we specify the root by recalling the condition $c(0) = 1$. Notice that where $s$ has a pole, the quotient $(1 - s^2)/(1 + s^2)$ has a removable singularity with cured value $-1$ so that $c$ has value $\pm \ii$; this is as expected. One matter does call for serious attention: the very existence of a meromorphic square-root. This is easily settled: the meromorphic quotient $(1 - s^2)/(1 + s^2)$ has double zeros and double poles, as may be readily checked; as the zeros and poles have even orders, the quotient has a meromorphic square-root as a consequence of the Weierstrass Factorization Theorem. 

\medbreak 

We may take a cue from the proof of Theorem \ref{IVP}: with $s$ fully extended as above, we may adopt the formula 
$$c = \frac{s\,'}{1 + s^2}$$
as a definition of $c$. The obstacles encountered on this route are of largely cosmetic character. There are singularities both where the denominator is zero and where either numerator or denominator has a pole. The zeros of $1 + s^2$ are double and serve also as simple zeros of $s\,'$; accordingly, these points are simple poles for $s\,'/(1 + s^2)$. Poles of the numerator coincide with poles of the denominator, both having order two; accordingly, these points are removable singularities of $s\,'/(1 + s^2)$ and they have cured value $\pm \ii$ as they should. 

\medbreak 

Finally, the most transparent approach of all is simply to define $c$ by the rule 
$$c(z) = s (L - z)$$
for all $z \in \CC$ such that $L - z$ is not a pole of $s$. For clarity, let us temporarily write 
$$C(z) = s(L - z)$$
for such $z$. This plainly defines a meromorphic function $C$; all we need do is verify that it restricts to $B_r (0)$ as the original lemniscatic cosine $c$. Certainly $C$ satisfies the initial condition $C(0) = 1$ because $s(L) = 1$. Also, $C\,' (z) = - s\,' (L - z)$ so that 
$$C\,' (z)^2 = s\,'(L - z)^2 = 1 - s(L - z)^4 = 1 - C(z)^4.$$
Unfortunately, as mentioned above, the first-order initial value problem `$(c\,')^2 = 1 - c^4; \; c(0) = 1$' is inadequate for singling out $c$ in $B_r (0)$. Fortunately, the second-order initial value problem 
$$c\,'' = - 2 c^3; \; \; c(0) = 1, \; c\,'(0) = 0$$ 
noted after Theorem \ref{IVP} is adequate for this purpose. From above, in addition to $C(0) = s(L) = 1$ and $C\,'(0) = - s\,'(L) = 0$ we have 
$$C\,''(z) = s\,''(L - z) = - 2 s(L - z)^3 = - 2 C(z)^3$$
because $s\,'' = - 2 s^3$ as noted after Theorem \ref{s}. This is enough to ensure that the meromorphic function $C$ extends the holomorphic function $c : B_r (0) \to \CC$. As usual, we drop the capitalization and refer to $C$ as simply $c$; this is the full lemniscatic cosine.   

\medbreak 

The properties of this full lemniscatic cosine may be deduced immediately from those of the full lemniscatic sine: $c$ has $4 L$ and $4 \ii L$ as periods; its poles are at the points $\{ \pm \ii L, \, 2 L \pm \ii L \}$ and points congruent modulo periods. 

\medbreak 

Naturally, this meromorphic extension $c$ continues to be `real', to be even and to be reciprocated under multiplication by $\ii$; further, it satisfies the duplication formulae that were announced at the start of the present section. 

\medbreak 

\section{Remarks} 

\medbreak 

In this closing section, we gather a number of miscellaneous observations regarding the lemniscatic functions, leaving some of the details as exercises. 

\medbreak 

As we have seen, the `Pythagorean' identity 
$$s^2 + s^2 c^2 + c^2 = 1$$
in the form 
$$(1 + s^2)(1 + c^2) = 2.$$
has consequences for $s$ and $c$: thus, either of these functions has poles exactly where the other has value $\pm \ii$; also, either function has zeros exactly where the other has value $\pm 1$. 

\medbreak 

The fundamental `complementary' relationship between $s$ and $c$ expressed in the formula 
$$c(z) = s(L - z)$$
has its own consequences. For example, in conjunction with Theorem \ref{i} it yields 
$$s(L - \ii z) = c(\ii z) = \frac{1}{c(z)} = \frac{1}{s(L - z)}$$
from which we deduce that $L - z$ is a zero of $s$ precisely when $L - \ii z$ is a pole of $s$. It follows from this and Theorem \ref{i} that the zero-set $Z_s$ of $s$ is related to its pole-set $P_s$ by 
$$Z_s = P_s \pm (L + \ii L).$$ 
The pole-set $P_c$ of $c$ is more directly related to its zero-set $Z_c$: indeed, Theorem \ref{i} shows at once that 
$$Z_c = \ii P_c.$$	

\medbreak 

Let us return to a consideration of the square with vertices $\pm 2 K \overline{\gamma}$ and $\pm 2 K \gamma$. Theorem \ref{i} makes clear the behaviour of $c$ on the diagonals of this square: if $z$ lies on one of these diagonals then $\overline{z} = \pm \ii z$ and therefore 
$$\overline{c(z)} = c(\overline{z}) = c(\pm \ii z) = 1/c(z);$$ 
thus $c(z)$ lies on the unit circle. The `complementary' identity of the preceding paragraph permits us to deduce from this that the values of $s$ around the perimeter of the square with vertices $\{ \pm L, \, \pm \ii L \}$ also lie on the unit circle. 

\medbreak 

Incidentally, recall that we extended $s$ from the square with vertices $\pm 2 K \overline{\gamma}$ and $\pm 2 K \gamma$ by means of the Schwarz Reflexion Principle. As the values of $s$ along the interval $(0, 2 K \gamma)$ lie in the line $\gamma \R$, the Schwarz Reflexion Principle enables us to recover $s$ on this square from $s$ on the triangle with vertices $\{ 0, L, 2 K \gamma \}$. We can go further: since a version of the Schwarz Reflexion Principle applies across circular arcs, we may in fact recover $s$ from its restriction to the triangle with vertices $\{ 0, L, K \gamma \}$; by the same token, we may instead generate the full $s$ from its restriction to the square with vertices $\{ \pm L, \, \pm \ii L \}.$

\medbreak 

Before passing on to other topics, we pause to record the following elementary consequences of the same `complementary' identity. As $c$ is even, 
$$s(L + z) = s(L - (-z)) = c(-z) = c(z);$$
as $s$ is odd, 
$$c(L + z) = c(L - (-z)) = s(-z) = -s(z).$$ 
\medbreak 
\noindent
Consequently, 
$$s(2 L + z) = - s(z) \; \; {\rm and} \; \; c(2 L + z) = - c(z)$$
whence we recover $4 L$ as a period of both $s$ and $c$. Similarly, 
$$s(2 \ii L + z) = - s(z) \; \; {\rm and} \; \; c(2 \ii L + z) = - c(z)$$
whence we recover $4 \ii L$ as a period of $s$ and $c$. 

\medbreak 

As elliptic functions, $s$ and $c$ have not only duplication formulae but also addition formulae, which assume a variety of shapes. One version of the addition formula for $s$ alone reads 
$$s(a + z) = \frac{s\,'(a) \, s(z) + s(a) \, s\,'(z)}{1 + s(a)^2 \, s(z)^2}$$
where $a$ and $z$ are such that both sides make sense. This may first be verified for constant $a \in B_r (0)$ and variable $z \in B_r(0)$ where everything is holomorphic: after some calculation, it is found that both sides satisfy the same initial value problem at $z = 0$; they therefore coincide by the Picard theorem and the Identity Theorem. Alternatively, $s$ and $c$ have joint addition formulae: 
$$s(a + z) = \frac{s(a) c(z) + c(a) s(z)}{1 - s(a) c(a) s(z) c(z)}$$
and 
$$c(a + z) = \frac{c(a) c(z) - s(a) s(z)}{1 + s(a) c(a) s(z) c(z)}\, .$$
\medbreak 

\medbreak 

As coperiodic elliptic functions, $s$ and $c$ share a coperiodic Weierstrass function. Before we identify this Weierstrass function, we should recognize that the periods $\{ \pm 4 L, \, \pm 4 \ii L \}$ do not contain a fundamental set for $s$ and $c$. Recall from above that the addition of either $2 L$ or $2 \ii L$ to the argument of either function reverses the value of the function. It follows from this that $ \pm 2 L \pm 2 i L$ is a period for any combination of signs. Any two of these constitute a fundamental set: if we shift the parallelogram with vertices $\{ 0, 2 L + 2 \ii L, 2 L - 2 \ii L, 4 L \}$ a little to the left, then the shifted parallelogram surrounds only the two simple poles $ 2 K \gamma$ and $2 K \overline{\gamma}$ of $s$. 

\medbreak 

The Weierstrass function $\wp$ coperiodic with the lemniscatic functions $s$ and $c$ is given by the rule 
$$\wp(z) = \tfrac{1}{2} \, \ii\, s(\tfrac{1}{2}(z + \ii z))^{-2}$$
\medbreak 
\noindent 
as is readily verified; otherwise said, with $\lambda = \gamma / \sqrt2 = \tfrac{1}{2} (1 + \ii)$, 
$$\wp(z) = \frac{\lambda^2}{s(\lambda z)^2}\, .$$
Note that 
$$\wp\,' (z) = - 2 \lambda^3 s(\lambda z)^{-3} s\,'(\lambda z)$$
so that 
$$\wp\,'(z)^2 = 4 \lambda^6 s(\lambda z)^{-6} (1 - s( \lambda z)^4)$$
and therefore 
$$\wp\,'(z)^2 = 4 \wp(z)^3 + \wp(z).$$
Thus, the Weierstrass function $\wp$ has invariants $g_2 = -1$ and $g_3 = 0$; it is {\it pseudolemniscatic}. 

\medbreak 

The pseudolemniscatic period lattice of $s$ and $c$ is not Jacobian, hence no true Jacobian functions ${\rm sn}$, ${\rm cn}$ and ${\rm dn}$ are associated to it. However, Jacobian functions have counterparts for any Weierstrass function: namely, a triple of functions called `primitive' by Neville in his classic account [3]. We proceed to relate the lemniscatic functions $s$ and $c$ to the `primitive' functions associated to the pseudolemniscatic Weierstrass function $\wp$. In what follows, we use freely the notation and terminology of [3]. 

\medbreak 

As half-periods, we take the triple 
$$\omega_f = 2 L, \; \omega_g = - L + \ii L \; \; {\rm and}\;  \; \omega_h = -L - \ii L$$ 
of which any pair is fundamental; the corresponding midpoint values of $\wp$ are found to be 
$$e_f = \wp(\omega_f) = 0, \; e_g = \wp(\omega_g) = \tfrac{1}{2} \ii \; \; {\rm and} \; \; e_h = \wp(\omega_h) = - \tfrac{1}{2} \ii.$$

\medbreak 

The `primitive' function ${\rm fj}$ of Neville is the meromorphic function defined by the rule  
$${\rm fj} \, (z)^2  = \wp(z) - e_f$$
and the requirement 
$$z \, {\rm fj}\,(z) \to 1 \; {\rm as} \; z \to 0.$$ 
Here, $e_f = 0$ and the function ${\rm fj}$ may be read directly from the explicit formula for $\wp$ given above: thus, 
$${\rm fj} \, (z) = \lambda \, s(\lambda \, z)^{-1} = \tfrac{1}{2}(1 + \ii) s(\tfrac{1}{2}(z + \ii z))^{- 1}.$$

\medbreak 

The analogously-defined functions ${\rm gj}$ and ${\rm hj}$ lie a little deeper. Passage to the square-root in 
$${\rm gj} \, (z)^2  = \wp(z) - \tfrac{1}{2} \ii$$
and
$${\rm hj} \, (z)^2  = \wp(z) + \tfrac{1}{2} \ii$$ 
is facilitated by the fact that the function $1 + s^2$ has a canonical square-root: explicitly, it may be checked that 
$$ 1 + s(2 w)^2 = \Big( \frac{1 + s(w)^2 c(w)^2}{1 - s(w)^2 c(w)^2} \Big)^2.$$ 
\medbreak 
\noindent 
Some calculation, again with $\lambda = \tfrac{1}{2} (1 + \ii)$ for convenience, results in the explicit formulae 
$${\rm gj} \, (2 z) = \lambda  \, \frac{c(\lambda z)^2  - s(\lambda z)^2}{2 s(\lambda z) \, c(\lambda z)}$$
and 
$${\rm hj} \, (2 z) = \lambda \, \frac{1 + s(\lambda z)^2 \, c(\lambda z)^2}{2 s(\lambda z) \, c(\lambda z)}\, .$$

\medbreak 

In the opposite direction, we may recover $s$ and $c$ from the `primitive' functions: thus, it may be verified that 
$$s(z) = \lambda \, {\rm fj} \, (z / \lambda)^{-1}$$
and 
$$c(z) = \frac{{\rm gj} \, (z / \lambda)}{{\rm hj} \, (z / \lambda)}\, .$$
\medbreak 
\noindent 
Alternatively, we may express $s$ in terms of the `elementary' function ${\rm jf}$ and $c$ in terms of the `elementary' function ${\rm gh}$; we refer to [3] for the definitions, merely recording the formulae 
$$s(z) = - 2 \lambda \, {\rm jf} \, (z / \lambda) = - (1 + \ii) {\rm jf} \, (z - \ii z)$$
and 
$$c(z) = 2 \lambda \, {\rm gh} \, (z / \lambda) = (1 + \ii) \, {\rm gh} \, (z - \ii z)\, .$$
\medbreak 

\medbreak 

We should also mention the Jacobian approach to the lemniscatic functions, to which we alluded at the start of this paper. For this purpose, we take the Weierstrass function $P$ that has $4 K$ and $4 \ii K$ as a fundamental pair of periods: this is given by the explicit formula 
$$P(z) = \tfrac{1}{2} \, s(\tfrac{1}{\sqrt2} z)^{-2}.$$
By direct calculation, 
$$P\,'(z)^2 = 4 P(z)^3 - P(z)$$
so that $P$ has invariants $g_2 = 1$ and $g_3 = 0$; this Weierstrass function is {\it lemniscatic} and its period lattice is truly Jacobian. The corresponding Jacobian functions ${\rm sn}$, ${\rm cn}$ and ${\rm dn}$ have self-complementary modulus $1/ \sqrt{2}$; in terms of the Jacobian function ${\rm cn}$ and the Glaisher quotient ${\rm sd} = {\rm sn} / {\rm dn}$, the lemniscatic functions $s$ and $c$ are given by 
$$s(z) = \tfrac{1}{\sqrt2} \, {\rm sd} \, (\sqrt2 z)$$
and 
$$c(z) = {\rm cn} \, (\sqrt2 z).$$ 

\medbreak 

It would be remiss of us not to mention that the numbers $K$ and $L = \sqrt2 K$ arising from our analysis already have names: in fact, it is more-or-less customary to write 
$$\int_0^1 \frac{{\rm d} \sigma \; \; \; }{(1 - \sigma^4)^{1/2}} =  \frac{1}{2} \, \varpi$$
by analogy with 
$$\int_0^1 \frac{{\rm d} \sigma \; \; \; }{(1 - \sigma^2)^{1/2}} =  \frac{1}{2} \, \pi;$$
the analogy is supported by the `complementary' relationship $c(z) = s(\frac{1}{2} \, \varpi - z)$ and the fact that $s$ and $c$ have $2 \varpi$ as a period. 

\medbreak 

Finally, a few references are in order. We recommend Chapter 2 of [1] for the Picard existence-uniqueness theorem along with much related material. For a fuller account of the use of Schwarz reflexions for constructing elliptic functions, see Section 3 of Chapter VI in [2].  The masterly treatment of Jacobian elliptic functions in [3] also includes an instructive Weierstrassian use of duplication to extend the domain of a meromorphic function. An enlightening account of lemniscatic integrals in connexion with the problem of doubling lemniscatic arcs may be found in [4].  Lemniscatic functions are presented as special cases of Jacobian elliptic functions in Chapter XXII of the classic treatise [5].

\bigbreak

\begin{center} 
{\small R}{\footnotesize EFERENCES}
\end{center} 
\medbreak 

[1] E. Hille, {\it Ordinary Differential Equations in the Complex Domain}, Wiley-Interscience (1976); Dover Publications (1997).

\medbreak 

[2] Z. Nehari, {\it Conformal Mapping}, McGraw-Hill (1952); Dover Publications (1975). 

\medbreak 

[3] E.H. Neville, {\it Jacobian Elliptic Functions}, Oxford University Press (1944). 

\medbreak 

[4] C.L. Siegel, {\it Topics in Complex Function Theory}, Volume I, Wiley-Interscience (1969). 

\medbreak 

[5] E. T. Whittaker and G. N. Watson, {\it A Course of Modern Analysis}, Second Edition, Cambridge University Press (1915).

\medbreak

\end{document}